\def\bp{\begin{proof}}
\def\ep{\end{proof}}
\definecolor{gr}{rgb}   {0.,   0.8,   0. } 
\definecolor{bl}{rgb}   {0.,   0.5,   1. } 
\definecolor{mg}{rgb}   {0.7,  0.,    0.7} 
\def\XXint#1#2#3{{\setbox0=\hbox{$#1{#2#3}{\int}$}
     \vcenter{\hbox{$#2#3$}}\kern-.5\wd0}}
 \numberwithin{equation}{section}
 \definecolor{db}{rgb}{0.0,0.0,0.8} 
\definecolor{dg}{rgb}{0.0,0.55,0.14}
\definecolor{dr}{rgb}{0.5,0,0.07}
\theoremstyle{definition}
 \swapnumbers\newtheorem{theorem}{Theorem}[section]
\newtheorem{proposition}[theorem]{Proposition}
\newtheorem{lemma}[theorem]{Lemma}
\newtheorem{corollary}[theorem]{Corollary}
\theoremstyle{definition}
\theoremstyle{definition}
 \theoremstyle{definition}
\theoremstyle{definition}
\theoremstyle{definition}
\theoremstyle{definition}
\newtheorem{remark}[theorem]{Remark}
\theoremstyle{definition}
\newcounter{step}
\def\be{\begin{equation}}
\def\ee{\end{equation}}
\def\bes{\begin{equation*}}
\def\ees{\end{equation*}}
\def\bt{\begin{theorem}}
\def\et{\end{theorem}}
\def\bpr{\begin{proposition}}
\def\epr{\end{proposition}}
\def\bl{\begin{lemma}}
\def\el{\end{lemma}}
\def\bc{\begin{corollary}}
\def\ec{\end{corollary}}
\def\br{\begin{remark}}
\def\er{\end{remark}}
\def\ben{\begin{enumerate}}
\def\bena{\begin{enumerate}[a)]}
\def\een{\end{enumerate}}
\def\bit{\begin{itemize}}
\def\iit{\end{itemize}}
\def\supp{\operatorname{supp}}
\def\esssup{\operatorname{esssup}}
\DeclareMathAlphabet{\mathonebb}{U}{bbold}{m}{n}
\newcommand{\one}{\ensuremath{\mathonebb{1}}}
\def\R{{\mathbb R}}
\def\N{{\mathbb N}}
\def\Z{{\mathbb Z}}
\def\fo{\forall\, }
\def\va{\varphi}
\def\d{\displaystyle}
\def\ve{\varepsilon}
\def\l{\label}
\def\sr{{\cal S}}
\date{\today}
\title{Characterization of function spaces via low regularity mollifiers}
\author{
Xavier Lamy
\thanks{Universit\'e de Lyon,  CNRS UMR 5208, Universit\'e Lyon 1, Institut Camille Jordan, 43 blvd. du 11 novembre 1918, F-69622 Villeurbanne cedex, France. Email address: xlamy$@$math.univ-lyon1.fr}
\and
Petru Mironescu    \thanks{Universit\'e de Lyon,  CNRS UMR 5208, Universit\'e Lyon 1, Institut Camille Jordan, 43 blvd. du 11 novembre 1918, F-69622 Villeurbanne cedex, France. Email address: mironescu$@$math.univ-lyon1.fr}        
}
\begin{document}
\maketitle
\begin{abstract}{
Smoothness of a function $f:\R^n\to\R$ can be measured in terms of the rate of convergence of $f\ast\rho_\ve$ to $f$, where $\rho$ is an appropriate mollifier. In the framework of fractional Sobolev spaces, we characterize the \enquote{appropriate} mollifiers. We also obtain sufficient conditions, close to being necessary, which ensure that $\rho$ is adapted to a given scale of spaces. Finally, we examine in detail the case where $\rho$ is a characteristic function.}
\end{abstract}
\tableofcontents
\section{Introduction}
\l{s1}

${}$

The smoothness of a function $f:\R^n\to\R$ can be measured by different decay properties, for example via the decay properties of its harmonic extension, or the ones of its Littlewood-Paley decomposition,  or the ones of its coefficients in an appropriate wavelets frame. See \cite[Chapter~2]{triebel2} for a thorough discussion on this subject. Another characterization is related to the rate of convergence of $f\ast\rho_\ve$ to $f$, where $\rho$ is an appropriate mollifier. For example, for non integer $s>0$ and $1\le p<\infty$ we have 
\be
\l{aa1}
\|f\|_{W^{s,p}}^p\sim \|f\|_{L^p}^p+\int_0^1\frac 1{\ve^{sp+1}}\|f-f\ast\rho_\ve\|_{L^p}^p\, d\ve, \ \text{where }\rho_\ve(x)=\frac 1{\ve^n}\rho\left(\frac x\ve\right), 
\ee
provided
\be
\l{aa2}
\rho\in\sr\text{ and }\int\rho=1.
\ee
Here $\sr$ denotes the Schwartz class of smooth, rapidly decreasing functions.

We address here the question of the validity of \eqref{aa1} under assumptions as weak as possible on $\rho$. This is a \enquote{continuous} (vs \enquote{discrete}) counterpart of the analysis of Bourdaud \cite{bourdaud} concerning  the minimal assumptions required on the (father and mother) wavelets appropriate for the characterization of Besov spaces. 

Usually, the assumption $\rho\in\sr$ is weakened as follows. First, validity of \eqref{aa1} is established for some $\widetilde\rho\in\sr$. Next, one expresses an arbitrary $\rho$ in the form
\be
\l{ab1}
\rho=\sum_{j\ge 0}\eta^j\ast\widetilde\rho_{2^{-j}}\ \text{\cite[Lemma 2, p. 93]{stein}}.
\ee 
Then, using \eqref{ab1} and the validity of \eqref{aa2} for $\widetilde\rho$, it follows that property \eqref{aa1} holds for $\rho$ provided the $\eta^j$'s decay sufficiently fast. Finally, decay of $\eta^j$ is obtained by requiring a sufficient decay of the Fourier transform $\widehat\rho$ of $\rho$. With more work, spatial conditions on $\rho$ (of Fourier multiplier's theorem type) ensure the decay of $\widehat\rho$ and thus lead to (usually suboptimal) sufficient conditions for the validity of \eqref{aa1}.\footnote{A typical result for which this approach is followed is the fact that the norm on the Besov spaces $B^s_{p,q}$ does not depend on the choice of the rapidly decreasing mollifier; see \cite[Section 2.3, p. 168]{triebel1} and the use of the Fourier multipliers theory \cite[Section 2.2.4, p. 161]{triebel1}.} Alternatively, in standard function spaces one can rely on the decomposition of functions in simple building blocks (e.g. atoms) and obtain almost sharp spatial sufficient conditions.  For such an approach in the framework of the Hardy spaces, see  \cite{steintw}, \cite{steinnon}.

In what follows, we will obtain, using very little technology,  necessary and sufficient conditions on $\rho$ in order to have \eqref{aa1}, and simple sufficient spatial conditions on $\rho$, close to being optimal.

Of special interest to us will be the validity of \eqref{aa1} when $f\ast\rho_\ve$ is particularly simple to compute. A typical example consists in taking   $\rho$ the characteristic function of a unit cube, e.g. $Q=(0,1)^n$ or $Q=(-1/2,1/2)^n$. We will determine the  spaces $W^{s,p}$ which can be described via such a $\rho$. 

It turns out that our techniques are adapted not only to the Sobolev spaces with non integer $s$, but more generally to the Besov spaces $B^s_{p,q}$ with $s>0$, $1\le p\le\infty$ and $1\le q\le \infty$. Recall that this scale of spaces includes the one of fractional Sobolev spaces, since $W^{s,p}=B^s_{p,p}$ for non integer $s$ \cite[Chapter~2]{triebel2}. For simplicity, we will write all our formulas and statements only when $q<\infty$. However, our results hold also when $q=\infty$, and the corresponding results are obtained by straightforward adaptations of the formulas and arguments.

Our first result is a one sided estimate, which surprisingly requires no smoothness of $\rho$.
\bt
\l{aa3}
Let $\rho\in L^1$ be such that $\int\rho=1$. Then we have
\be
\l{aa4}
\|f\|_{B^{s}_{p,q}}^q\lesssim \|f\|_{L^p}^q+\int_0^1\frac 1{\ve^{sq+1}}\|f-f\ast\rho_\ve\|_{L^p}^q\, d\ve, \ \fo s>0, \fo 1\le p\le\infty,\ \fo 1\le q<\infty.
\ee
\et 
\br
It is tempting to extend Theorem \ref{aa3} to finite measures, but the example $\rho=\delta_0$ (the Dirac mass at the origin) shows that Theorem \ref{aa3} need not hold for a measure. We do not know how to characterize the finite measures of total measure $1$ satisfying \eqref{aa4}.
\er

We next discuss what is needed in order to obtain the reverse of \eqref{aa4}. For this purpose, we fix some  $\eta\in\sr$. Assuming that the reverse of \eqref{aa4} holds, we have
\be
\l{aa5}
\int_0^1\frac 1{\ve^{sq+1}}\|\eta-\eta\ast\rho_\ve\|_{L^p}^q\, d\ve<\infty.
\ee 

It turns out that \eqref{aa5} with $p=q=1$ is also sufficient. 
\bt
\l{ab2}
Let $\rho\in L^1$ satisfy $\int\rho=1$. Let $s>0$. Then the following are equivalent.
\ben
\item
There exists some $\eta\in\sr$ such that $\int\eta\neq 0$ and
\be
\l{ac1}
\int_0^1\frac 1{\ve^{s+1}}\|\eta-\eta\ast\rho_\ve\|_{L^1}\, d\ve<\infty.
\ee
\item
For every $1\le p\le\infty$ and every $1\le q<\infty$ we have
\be
\l{ab3}
\|f\|_{B^{s}_{p,q}}^q\sim \|f\|_{L^p}^q+\int_0^1\frac 1{\ve^{sq+1}}\|f-f\ast\rho_\ve\|_{L^p}^q\, d\ve.
\ee
\een
\et
An additional equivalent characterization of $\rho$ satisfying the above properties will be provided in Section \ref{s4}.

We now turn to the case where $\rho$ is the characteristic function of a set $A$. In that case, the range of values of $s$ for which the equivalent characterizations of Theorem~\ref{ab2} are satisfied depends only on whether or not the set $A$ is centered:

\bpr
\l{ab5}
Let $\rho=\d\frac{1}{|A|}\one_A$, where $A\subset\R^n$ is a bounded measurable set of positive Lebesgue measure. Then $\rho$ characterizes all the spaces $B^s_{p,q}$ for fixed $s$ (that is, \eqref{ab3} is valid) if and only if:
\ben
\item
Either $\int_A y\, dy =0$ and $s<2$.
\item
Or $\int_A y\, dy \neq 0$ and $s<1$.
\een
\epr

Finally, we provide sufficient spatial conditions for the validity of \eqref{ab3} when $0<s<1$.
\bpr
\l{ab7}
Let $\rho\in L^1$ satisfy $\int\rho =1$, and $0<s<1$. If $\rho$ satisfies the moment condition
\begin{equation}\label{moment}
\int |y|^s |\rho(y)|\, dy <\infty,
\end{equation} 
then $\rho$ characterizes all spaces $B^s_{p,q}$. That is, \eqref{ab3} is valid.
\epr

For $s\geq 1$, the exemple of $\rho=\one_A$ with uncentered $A$ shows that there is no such simple sufficient finite moment condition. In order to obtain the validity of \eqref{ab3} for higher $s$, one would need to ask for the vanishing of moments, as in the case of $\rho=\one_A$. For more details see Proposition~\ref{pmoments} below.

The sufficient spatial condition \eqref{moment} turns out to be optimal, in the sense that for non negative $\rho$ it is also necessary:
\bpr
\l{ab8}
Let $s>0$. Let $\rho\in L^1$ satisfy $\int\rho=1$ and $\rho\geq 0$. If \eqref{ab3} is valid, then $\rho$ necessarily satisfies the moment condition \eqref{moment}.
\epr

The plan of the paper is as follows. In Section~\ref{s2} we introduce some preliminary notation, definitions and tools required in the sequel. In Sections~\ref{s3} and \ref{s4} we prove our two main results, Theorems \ref{aa3} and \ref{ab2}. Eventually, Section~\ref{s5} is devoted to proving Propositions \ref{ab5}, \ref{ab7} and \ref{ab8}.

\subsubsection*{Acknowledgments}

Part of this work was carried out while XL was visiting McMaster University. He thanks the Mathematics and Statistics department of McMaster University, and in particular S.~Alama and L.~Bronsard, for their hospitality.
PM was partially supported by the ANR project \enquote{Harmonic Analysis at its Boundaries},   ANR-12-BS01-0013-03.
\section{Preliminaries}
\l{s2}

\subsection{Littlewood-Paley decomposition and $B^s_{p,q}$}\label{s2s1}

We will make use of the (inhomogeneous) Littlewood-Paley decomposition of a temperate distribution.
Let $\zeta,\varphi\in\mathcal S(\mathbb R^n)$ be as follows:
\begin{itemize}
\item
$\supp \widehat \zeta \subset B(0,2)$ and $\widehat\zeta\equiv 1$ in a neighborhood of $\overline B (0,1)$,
\item $\varphi:=\zeta_{1/2}-\zeta$, so that $\widehat\varphi =\widehat\zeta(\cdot/2)  -\widehat\zeta$ and $\supp\widehat\varphi\subset B(0,4)\setminus\overline B(0,1)$.
\end{itemize}
The (inhomogeneous) Littlewood-Paley decomposition of a temperate distribution $f\in\mathcal S'(\mathbb R^n)$ is then given by
\begin{equation}\label{LP}
f=\sum_{j\geq 0} f_j,\quad\text{where }f_0=f\ast\zeta\text{ and }f_j=f\ast \varphi_{2^{1-j}}\text{ for }j\geq 1.
\end{equation}
See for instance \cite[Section~VI.4.1]{stein}.

The Littlewood-Paley  decomposition can be used to characterize the space $B^s_{p,q}$  \cite[Section 2.3.2, Proposition 1, p. 46]{triebel2}, and this is the definition we adopt here:
\begin{equation}\label{seminorm}
B^s_{p,q}=\left\lbrace f\in L^ p;\,   |f|_{B^s_{p,q}}^q := \sum_{j\geq 0} 2^{sjq}\|f_j\|_{L^p}^q <\infty \right\rbrace.
\end{equation}
The norm on $B^s_{p,q}$ is defined by
\begin{equation}\label{norm}
\| f\|^q_{B^s_{p,q}} = \|f\|_{L^p}^q + |f|_{B^s_{p,q}}^q.
\end{equation}
Different choices of $\zeta$ yield equivalent norms \cite[Section~2.3]{triebel3}. See also \cite[Chapter~3]{triebel3} for other equivalent characterizations of $B^s_{p,q}$.

\subsection{Schur's criterion}\label{s2s2}

We will also make use of the following Schur-type estimate for kernel operators; see e.g. \cite[Appendix I]{grafakosclassical}.

\begin{lemma}\label{schur}
Let $(X,\mu)$ and $(Y,\nu)$ be two ($\sigma$-finite) measure spaces, let $1\leq p\leq\infty$, and $\kappa\colon X\times Y \to\mathbb C$ a measurable kernel. If the quantities
\begin{equation*}
M_1:=\esssup_x \int |\kappa(x,y)|\, d\nu(y)\quad\text{and}\quad M_2:=\esssup_y\int |\kappa(x,y)|\, d\mu(x),
\end{equation*}
are finite, then the formula
\begin{equation*}
Tu(x)=\int \kappa(x,y)u(y)\,  d\nu(y)
\end{equation*}
defines a bounded linear operator from $L^p(Y)$ to $L^p(X)$, with norm
\begin{equation*}
\| T\| \leq M_1^{1/p'}M_2^{1/p}.
\end{equation*}
Here $p'=p/(p-1)$ is the conjugate exponent of $p$.
\end{lemma}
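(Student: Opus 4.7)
The plan is the standard splitting-and-Hölder argument. The core idea is to write
\begin{equation*}
|\kappa(x,y)u(y)|=|\kappa(x,y)|^{1/p'}\cdot|\kappa(x,y)|^{1/p}|u(y)|
\end{equation*}
inside the integral defining $Tu(x)$ and apply Hölder's inequality in $y$ with conjugate exponents $p'$ and $p$. The first factor will be controlled by $M_1^{1/p'}$ directly from the definition of $M_1$, while the second, after raising to the power $p$ and integrating in $x$, will be controlled by $M_2$ via Tonelli's theorem.

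First I would dispose of the endpoints, with the convention $M^{0}=1$. For $p=\infty$, the pointwise bound $|Tu(x)|\le\|u\|_{L^\infty}\int|\kappa(x,y)|\,d\nu(y)\le M_1\|u\|_{L^\infty}$ is immediate and coincides with $M_1^{1/p'}M_2^{1/p}\|u\|_{L^\infty}$. For $p=1$, Tonelli yields
\begin{equation*}
\int|Tu(x)|\,d\mu(x)\le\int|u(y)|\left(\int|\kappa(x,y)|\,d\mu(x)\right)d\nu(y)\le M_2\|u\|_{L^1},
\end{equation*}
which again matches the claimed bound.

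For $1<p<\infty$, Hölder's inequality gives
\begin{equation*}
|Tu(x)|\le\left(\int|\kappa(x,y)|\,d\nu(y)\right)^{1/p'}\left(\int|\kappa(x,y)||u(y)|^p\,d\nu(y)\right)^{1/p}\le M_1^{1/p'}\left(\int|\kappa(x,y)||u(y)|^p\,d\nu(y)\right)^{1/p}.
\end{equation*}
Raising to the $p$-th power, integrating in $x$ with respect to $\mu$, and applying Tonelli, one obtains
\begin{equation*}
\|Tu\|_{L^p}^p\le M_1^{p/p'}\int|u(y)|^p\left(\int|\kappa(x,y)|\,d\mu(x)\right)d\nu(y)\le M_1^{p/p'}M_2\,\|u\|_{L^p}^p,
\end{equation*}
and extracting the $p$-th root produces precisely the announced bound $\|T\|\le M_1^{1/p'}M_2^{1/p}$.

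There is no serious obstacle: the whole proof reduces to one application of Hölder in the inner integral and one application of Tonelli in the outer integral, together with careful bookkeeping of the exponents $1/p$ and $1/p'$. The only side remark worth recording is that measurability of $Tu$ and the $\mu$-almost everywhere finiteness of the defining integral both follow from Tonelli applied to $|\kappa(x,y)u(y)|$ once the right-hand side of the final estimate is known to be finite, which is where the $\sigma$-finiteness assumption on $(X,\mu)$ and $(Y,\nu)$ is implicitly used.
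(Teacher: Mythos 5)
Your proof is correct and is the standard Schur-test argument (split $|\kappa|=|\kappa|^{1/p'}|\kappa|^{1/p}$, Hölder in $y$, Tonelli in $x$), which is exactly what the reference \cite[Appendix~I]{grafakosclassical} cited by the paper does; the paper itself does not reproduce a proof. The endpoint cases and the bookkeeping of exponents are all handled correctly.
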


\section{Proof of Theorem \ref{aa3}}
\l{s3}

\bp[Proof of Theorem \ref{aa3}]
We are going to prove a discrete version of \eqref{aa4}. We start from the identity
\begin{equation}\label{discr}
\int_0^1 \|f-f\ast\rho_\ve\|_{L^p}^q\, \frac{d\ve}{\ve^{sq+1}} = \sum_{j\geq 0} 2^{sjq}\int_{1/2}^1\|f-f\ast\rho_{2^{-j}\ve}\|_{L^p}^q\, d\ve.
\end{equation}
In view of \eqref{discr}, it suffices to establish the estimate
\begin{equation}\label{aa3toshow}
\|f\|^q_{B^s_{p,q}} \lesssim \|f\|_{L^p}^q + \sum_{j\geq 0} 2^{sjq}\|f-f\ast\rho_{2^{-j}\ve}\|_{L^p}^q,
\end{equation}
uniformly with respect to $\ve\in (1/2,1)$. Integrating \eqref{aa3toshow} and using \eqref{discr}, we obtain indeed the desired inequality \eqref{aa4}.

To simplify the notation, we will establish \eqref{aa3toshow} for $\ve=1$, which amounts to considering $\widetilde\rho=\rho_\ve$ instead of $\rho$. It will be clear at the end of the proof that all estimates are indeed uniform with respect to $\ve\in (1/2,1)$.

We introduce a function $\psi\in\sr$ satisfying the following:
\begin{equation}\label{psi}
\widehat\psi\equiv 1\text{ on }\mathrm{supp}\:\widehat\varphi,\ \text{ and }\widehat\psi (0)=0.
\end{equation}
Recall that $\varphi$ is the function used in the definition of the Littlewood-Paley decomposition \eqref{LP}. Since the support of $\widehat\varphi$ is contained in the annulus $\lbrace 1\leq |\xi |\leq 4 \rbrace$, it is indeed possible to choose $\psi$ satisfying \eqref{psi}.

We need to estimate the $B^s_{p,q}$ semi-norm of $f$, hence the sum
\begin{equation*}
\sum_{j\geq 0} 2^{sjq}\|f_j\|_{L^p}^q,
\end{equation*}
where $f=\sum_j f_j$ is the Littlewood-Paley decomposition \eqref{LP}. We introduce an integer $k>0$, to be fixed later, and split the sum into two parts:
\begin{equation}\label{aa3proof1}
|f|_{B^s_{p,q}}^q\leq \sum_{ j \leq k} 2^{sjq}\|f_j\|_{L^p}^q + \sum_{j> k} 2^{sjq}\|f_j\|_{L^p}^q.
\end{equation}
Using the fact that
\begin{equation*}
\|f_j\|_{L^p} = \| f\ast\varphi_{2^{1-j}} \|_{L^p} \leq \|f\|_{L^p}\|\varphi\|_{L^1},\ \fo j\ge 1,\ \text{ and } \|f_0\|_{L^p}=\|f\ast\zeta\|_{L^p}\le \|f\|_{L^p}\|\zeta\|_{L^1}, 
\end{equation*}
we simply estimate the first sum in the right-hand side of \eqref{aa3proof1} by
\begin{equation}\label{aa3proof2}
\sum_{j \leq k} 2^{sjq}\|f_j\|_{L^p}^q \lesssim \|f\|_{L^p}^q.
\end{equation}

We  next turn to estimating the second sum. We will use the notation 
$\rho^j :=\rho_{2^{-j}}$,
and similarly $\varphi^j$, and so on.

Taking advantage of the fact that $\psi\ast\varphi=\varphi$ (and thus $\psi^j\ast\varphi^j=\varphi^j$)
 we write, for $j> k$,
\begin{equation*}
\begin{aligned}
f_{j+1} & = (f-f\ast\rho^{j-k}+f\ast\rho^{j-k})\ast\varphi^j \\
& =(f-f\ast\rho^{j-k})\ast\varphi^j+f\ast\rho^{j-k}\ast\psi^j\ast\varphi^j \\
& = (f-f\ast\rho^{j-k})\ast\varphi^j + f_{j+1}\ast(\rho\ast\psi^k)^{j-k}.
\end{aligned}\end{equation*}
We deduce the estimate
\begin{equation}\label{aa3proof3}
\|f_{j+1}\|_{L^p} \leq \|\varphi\|_{L^1}\|f-f\ast\rho^{j-k}\|_{L^p} + \|\rho\ast\psi^k\|_{L^1}\|f_{j+1}\|_{L^p}.
\end{equation}
Since $\widehat\psi(0)=0$, we can apply Lemma~\ref{keylem} below: it holds
\begin{equation}\label{aa3proof4}
\|\rho\ast\psi^k\|_{L^1} = \|\rho\ast\psi_{2^{-k}}\|_{L^1} \rightarrow 0,\quad\text{as }k\to \infty.
\end{equation}
Thus for sufficiently large $k$ we may absorb the last term of the right-hand side of \eqref{aa3proof3} into the left-hand side. For such $k$, we have
\begin{equation}\label{aa3proof5}
\|f_{j+1}\|_{L^p} \lesssim \|f-f\ast\rho^{j-k}\|_{L^p}\quad\text{for }j\geq k.
\end{equation}
Plugging \eqref{aa3proof5} into \eqref{aa3proof1} and recalling \eqref{aa3proof2}, we obtain
\begin{equation}\label{aa3proof6}
\|f\|^q_{B^s_{p,q}} \lesssim \|f\|_{L^p}^q + \sum_{j\geq 0} 2^{sjq}\|f-f\ast\rho_{2^{-j}}\|_{L^p}^q.
\end{equation}
The latter estimate is exactly the desired estimate \eqref{aa3toshow} with $\varepsilon=1$. The corresponding estimate for $1/2\leq \varepsilon\leq 1$ is found by replacing $\rho$ with $\widetilde\rho=\rho_\ve$ in the proof of \eqref{aa3proof6}. The resulting estimate is uniform with respect to $\ve\in (1/2,1)$ thanks to Remark~\ref{keyrem} following Lemma~\ref{keylem}. This concludes the proof of Theorem~\ref{aa3}.
\ep

\begin{lemma}\label{keylem}
Let $\rho\in L^1$, and let $\psi\in L^1$ satisfy $\int\psi=0$. Then
\begin{equation*}
\lim_{\ve\to 0} \| \rho\ast\psi_\ve\|_{L^1} = 0.
\end{equation*}
\end{lemma}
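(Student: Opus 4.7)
The plan is to reduce the claim to the standard continuity of translation in $L^1$, using the cancellation $\int \psi = 0$ as the mechanism that introduces a difference quotient.

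First I would rewrite the convolution. By the change of variables $y = \varepsilon z$,
\[
\rho\ast\psi_\varepsilon(x) = \int \rho(x-\varepsilon z)\,\psi(z)\,dz,
\]
and since $\int\psi = 0$ implies $\int \rho(x)\psi(z)\,dz = 0$ for each fixed $x$, I can subtract this to obtain
\[
\rho\ast\psi_\varepsilon(x) = \int \bigl[\rho(x-\varepsilon z)-\rho(x)\bigr]\psi(z)\,dz.
\]
This is the key identity: the $\int\psi = 0$ hypothesis has been converted into an increment of $\rho$.

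Next I would take the $L^1(dx)$ norm and apply Fubini (justified since the absolute integrand $|\rho(x-\varepsilon z)-\rho(x)||\psi(z)|$ is integrable on $\R^n\times\R^n$ with bound $2\|\rho\|_{L^1}\|\psi\|_{L^1}$), getting
\[
\|\rho\ast\psi_\varepsilon\|_{L^1} \;\leq\; \int \bigl\|\rho(\cdot-\varepsilon z)-\rho\bigr\|_{L^1}\,|\psi(z)|\,dz.
\]
For each fixed $z\in\R^n$, continuity of translation in $L^1$ yields $\|\rho(\cdot-\varepsilon z)-\rho\|_{L^1}\to 0$ as $\varepsilon\to 0$. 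The integrand is dominated by $2\|\rho\|_{L^1}|\psi(z)|$, which is integrable. Dominated convergence then gives $\|\rho\ast\psi_\varepsilon\|_{L^1}\to 0$, as desired.

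There is really no obstacle of substance: the only point that needs care is the justification of Fubini (a uniform $L^1$ bound on the double integrand suffices) and the invocation of continuity of translation in $L^1$, which is a standard density fact (approximate $\rho$ by compactly supported continuous functions). I would also note for the companion remark following this lemma that the bound is uniform in the rescaling $\rho\leftrightarrow\rho_\varepsilon$ in the sense needed for the proof of Theorem \ref{aa3}, since the estimate depends on $\rho$ only through $\|\rho\|_{L^1}$ and through the modulus of continuity of translation, both of which are scale-invariant for $\rho_\varepsilon$ with $\varepsilon\in(1/2,1)$.
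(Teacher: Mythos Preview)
Your proof is correct and is in fact more direct than the paper's own argument. The paper subtracts from $\rho(y)$ not $\rho(x)$ but the average $\fint_{B_{R\ve}(x)}\rho$, then splits the resulting double integral according to whether $|y-x|<R\ve$ or $|y-x|\ge R\ve$; the near part is controlled by continuity of translation (as in your argument) and the far part by the tail $\int_{|w|\ge R}|\psi|$, after which one lets $R\to\infty$. Your route---subtracting $\rho(x)$ itself and applying dominated convergence in $z$---collapses both steps into a single line, since the pointwise convergence $\|\rho(\cdot-\ve z)-\rho\|_{L^1}\to 0$ and the uniform bound $2\|\rho\|_{L^1}|\psi(z)|$ already handle the ``tail'' in $z$ automatically via the integrability of $\psi$. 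The paper's approach is a bit more hands-on and yields an explicit rate in terms of $\sup_{|h|<2R\ve}\|\rho(\cdot+h)-\rho\|_{L^1}$ and $\int_{|w|\ge R}|\psi|$, but for the bare limit statement your argument is cleaner and loses nothing. Your remark on uniformity in $\delta\in[1/2,1]$ is also correct: after the change of variable one has $\|\rho_\delta(\cdot-\ve z)-\rho_\delta\|_{L^1}=\|\rho(\cdot-\ve z/\delta)-\rho\|_{L^1}$, and dominated convergence applies uniformly since $\ve z/\delta\to 0$ uniformly in $\delta\in[1/2,1]$ for each fixed $z$.
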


\begin{remark}\label{keyrem}
If we apply Lemma~\ref{keylem} to $\rho_\delta$ instead of $\rho$, with $1/2\leq \delta\leq 1$, then the resulting convergence is uniform with respect to $\delta$, as is shown by the following computation:
\begin{equation*}
\lim_{\ve\to 0}\, \sup_{1/2\leq \delta\leq 1} \, \|\rho_\delta\ast\psi_\ve\|_{L^1} = \lim_{\ve\to 0} \, \sup_{1/2\leq \delta\leq 1} \, \|(\rho\ast\psi_{\ve/\delta})_\delta\|_{L^1}   = \lim_{\ve\to 0} \, \sup_{1/2\leq \delta\leq 1}\,  \|\rho\ast\psi_{\ve/\delta}\|_{L^1} = \lim_{\ve\to 0} \| \rho\ast\psi_\ve\|_{L^1}.
\end{equation*}
\end{remark}

\begin{proof}[Proof of Lemma~\ref{keylem}]
We introduce a parameter $R>0$.
Taking advantage of the fact that $\int\psi=0$, we may write
\begin{equation}\label{key1}
\begin{aligned}
\rho\ast\psi_\varepsilon (x) & = 
\frac{1}{\varepsilon^n}\int \Big(\rho(y)-\fint_{B_{R\ve}(x)}\rho \Big) \psi\left( \frac{x-y}{\ve}\right) \, dy \\
& = \frac{1}{R^n\varepsilon^{2n}\omega_n} \iint_{|z-x|<R\ve} (\rho(y)-\rho(z))\psi\left(\frac{x-y}{\ve}\right)\, dydz,
\end{aligned}\end{equation}
where $B_{R\ve}(x)$ is the open ball of center $x$ and radius $R\ve$, and $\omega_n$ is the Lebesgue measure of the unit ball. We then have
\begin{equation}\label{key2}
\int |\rho\ast\psi_\ve(x)|\, dx \leq \int A_R(x)\, dx + \int B_R(x)\, dx,
\end{equation}
where
\begin{align}
A_R(x) & = \frac{1}{R^n\ve^{2n}\omega_n}\iint_{\substack{|z-x|<R\varepsilon \\ |y-x|<R\ve}}|\rho(y)-\rho(z)|\left|\psi\left(\frac{x-y}{\ve}\right)\right|\, dydz,\label{key3}\\
B_R(x) & = \frac{1}{R^n\ve^{2n}\omega_n}\iint_{\substack{|z-x|<R\varepsilon \\ |y-x|\geq R\ve}}(|\rho(y)|+|\rho(z)|)\left|\psi\left(\frac{x-y}{\ve}\right)\right|\, dydz. \label{key4}
\end{align}

To estimate $\int A_R(x)\, dx$, we perform the change of variable $x\leadsto w=(x-y)/\ve$ and find
\begin{align*}
\int A_R(x)\, dx & \le \frac{1}{R^n\varepsilon^n\omega_n} \int_{|w|<R}|\psi(w)|\,dw \iint_{|z-y|<2R\ve}|\rho(y)-\rho(z)|\, dy dz \\
& \leq \frac{\|\psi\|_{L^1}}{R^n\ve^n\omega_n}\int_{|h|<2R\ve} \|\rho(\cdot+h)-\rho\|_{L^1} \, dh,
\end{align*}
and thus
\begin{equation}\label{key5}
\int A_R(x)\, dx \leq 2^n\|\psi\|_{L^1}\sup_{|h|<2R\ve}\|\rho(\cdot+h)-\rho\|_{L^1}.
\end{equation}
Note that, for any fixed $R$, the right-hand side of \eqref{key5} converges to 0 as $\ve\to 0$.

We next estimate $\int B_R(x)\, dx$. To this end we compute
\begin{equation}\label{key6}
\begin{aligned}
\frac{1}{R^n\ve^{2n}\omega_n}\iiint_{\substack{|z-x|<R\ve \\ |y-x|\geq R\ve}} |\rho(y)|\left|\psi\left(\frac{x-y}{\ve}\right)\right| \, dxdydz & 
=\frac{1}{\ve^n}\iint_{|y-x|\geq R\ve} |\rho(y)| \left|\psi\left(\frac{x-y}{\ve}\right)\right| dx dy \\
& = \|\rho\|_{L^1}\int_{|w|\geq R}|\psi(w)|\, dw,
\end{aligned}\end{equation}
and
\begin{equation}\label{key7}
\begin{aligned}
\frac{1}{R^n\ve^{2n}\omega_n}\iiint_{\substack{|z-x|<R\ve \\ |y-x|\geq R\ve}} |\rho(z)|\left|\psi\left(\frac{x-y}{\ve}\right)\right| \, dxdydz & 
=\frac{1}{R^n\ve^n\omega_n}\int_{|w|\geq R}\!\!\! |\psi(w)|\,dw \iint_{|z-x|< R\ve}\!\!\!\! |\rho(z)|  dx dz \\
& = \|\rho\|_{L^1}\int_{|w|\geq R}|\psi(w)|\, dw.
\end{aligned}\end{equation}
Plugging \eqref{key6} and \eqref{key7} into formula \eqref{key4}, we obtain
\begin{equation}\label{key8}
\int B_R(x) \, dx \le C \|\rho\|_{L^1} \int_{|w|\geq R}|\psi(w)|\, dw.
\end{equation}
Combining \eqref{key2}, \eqref{key5} and \eqref{key8} we obtain
\begin{equation*}
\limsup_{\ve\to 0} \|\rho\ast\psi_\ve\|_{L^1}\leq C \|\rho\|_{L^1} \int_{|w|\geq R}|\psi(w)|\, dw,
\end{equation*}
and complete  the proof of Lemma~\ref{keylem} by letting $R\to \infty$.
\end{proof}

\section{Proof of Theorem \ref{ab2}}
\l{s4}

${}$
\bp[Proof of Theorem \ref{ab2}]
We clearly have \enquote{$2\implies 1$}, and it remains to prove that \enquote{$1\implies 2$}. For the convenience of the reader, we start by establishing a consequence of property 1, and then we proceed to the proof of the desired implication.

\medskip
\noindent
{\it Step 1.} A discrete-uniform version of 1.\\
Assume that property 1 holds. Then we claim that for every $\va\in\sr$ we have
\be
\l{vw3}
\sup_{1/2\le\ve\le 1}\ \sum_{j\ge 0}2^{sj}\|\va-\va\ast\rho_{2^{-j}\ve}  \|_{L^1}\le C<\infty.
\ee
In order to prove \eqref{vw3}, we start from the following fact. We fix a function $\lambda\in\sr$ such that $\int\lambda\neq 0$. Then every function $\psi\in{\mathscr S}(\R^n)$ may be written as 
\be
\l{baf1}
\psi=\sum_{k\ge 0}\lambda^{k}_\psi\ast\lambda_{2^{-k}}.
\ee
Here $(\lambda^{k}_\psi)_k\subset  \sr$ is a sequence that decays rapidly as $k\to\infty$, in the following sense: if $\psi$ belongs to a bounded subset ${\mathscr B}\subset \sr$, then for every $M>0$  there  exists a constant $C$ such that
\be
\l{baf2}
\|\lambda^{k}_\psi\|_{L^1}\le \frac{C}{2^{Mk}},\ \fo k\ge 0,\ \fo\psi\in {\mathscr B};
\ee
see \cite[Lemma 2, p. 93]{stein}. In particular, if we fix $\va\in\sr$ then we may write
\be
\l{aux1}
\va_t=\sum_{k\ge 0}\lambda^{k,t}\ast\lambda_{2^{-k}}, \ \fo t\in [1,2],
\ee
with
\be
\l{aux2}
\|\lambda^{k,t}\|_{L^1}\le \frac{C}{2^{Mk}},\ \fo k\ge 0,\ \fo t\in [1,2].
\ee
We now choose an appropriate $\lambda\in\sr$. In view of \eqref{discr}, if property 1 holds then we may find some $\ve\in [1/2,1]$ such that $\lambda:=\eta_{1/\ve}$ satisfies
\be
\l{aux3}
\sum_{k\ge 0}2^{sk}\|\lambda-\lambda\ast\rho_{2^{-k}}\|_{L^1}=\sum_{k\ge 0}2^{sk}\|\eta-\eta\ast\rho_{2^{-k}\ve}\|_{L^1}<\infty.
\ee
By combining \eqref{aux1}-\eqref{aux3} we find that, with $\ve\in [1/2,1]$ and $t:=1/\ve\in  [1,2]$, we have
\bes
\begin{aligned}
\sum_{j\ge 0}2^{sj}\|\va-\va\ast\rho_{2^{-j}\ve}\|_{L^1}&=\sum_{j\ge 0}2^{sj}\|\va_t-\va_t\ast\rho_{2^{-j}}\|_{L^1}\le \sum_{j\ge 0}2^{sj}\sum_{k\ge 0}\|\lambda^{k,t}\ast\lambda_{2^{-k}}-\lambda^{k,t}\ast\lambda_{2^{-k}}\ast\rho_{2^{-j}}\|_{L^1}\\
&\le \sum_{j\ge 0}\sum_{k\ge 0}2^{sj}\|\lambda^{k,t}\|_{L^1}\|\lambda_{2^{-k}}-\lambda_{2^{-k}}\ast\rho_{2^{-j}}\|_{L^1}\\
&\le C\sum_{j\ge 0}\sum_{k>j}2^{sj}\|\lambda^{k,t}\|_{L^1}+\sum_{j\ge 0}\sum_{k\le j}2^{sj}\|\lambda^{k,t}\|_{L^1}\|\lambda-\lambda\ast\rho_{2^{k-j}}\|_{L^1}\\
&\le C\sum_{j\ge 0}\sum_{k>j}2^{sj}2^{-(s+1)k}+\sum_{\ell\ge 0}\sum_{j\ge \ell}2^{sj}\|\lambda^{j-\ell,t}\|_{L^1}\|\lambda-\lambda\ast\rho_{2^{-\ell}}\|_{L^1}\\
&\le C+C\sum_{\ell\ge 0}\sum_{j\ge \ell}2^{sj}2^{-(s+1)(j-\ell)}\|\lambda-\lambda\ast\rho_{2^{-\ell}}\|_{L^1}\\
&\le C+C\sum_{\ell\ge 0}2^{s\ell}\|\lambda-\lambda\ast\rho_{2^{-\ell}}\|_{L^1}\le C,
\end{aligned}
\ees
with constants independent of $t$, 
i.e., \eqref{vw3} holds.

\medskip
\noindent{\it Step 2.} Proof of \enquote{$1\implies 2$}.\\
As we proved in the previous step, we may assume that  there exists some $\eta\in\sr$ such that 
\be
\l{tbb}
\widehat\eta\equiv 1\text{ in }B(0,4),
\ee
and such that $\eta$ satisfies the following uniform and discrete version of \eqref{ac1}:
\be
\l{ac2}
S_\ve:=\sum_{j\ge 0}2^{sj}\|\eta-\eta\ast\rho_{2^{-j}\ve}\|_{L^1}\le C,\ \fo\ve\in [1/2,1],\  \text{with }C\text{ independent of }\ve\in [1/2,1].
\ee

Let $f\in L^p$. We will establish the estimate
\be
\l{ac3}
\sum_{j\ge 0}2^{sjq}\|f-f\ast\rho_{2^{-j}\ve}\|_{L^p}^q\le  C\ (1+S_\ve)^q\  |f|_{B^s_{p,q}}^q, \fo\ve\in [1/2,1], \ \text{with }C\text{ independent of }\ve\in [1/2,1].
\ee

We obtain \eqref{ab3} by integrating \eqref{ac3} in $\ve$ and using \eqref{ac2}.

In turn, estimate \eqref{ac3} is obtained as follows. Set
\be
\l{ad5}
 \alpha_{j,\ve}:=2^{sj}\|\eta-\eta\ast\rho_{2^{-j}\ve}\|_{L^1}, \text{ which satisfies }\sum_{j\ge 0}\alpha_{j,\ve }\le C, \fo \ve\in [1/2,1].
\ee

Let $f=\sum_{\ell\ge 0}f_\ell$ be the (inhomogeneous) Littlewood-Paley decomposition of  $f\in L^p$, defined in Section~\ref{s2s1}. By \eqref{tbb}, for every $\ell$ we have $f_\ell=f_\ell\ast\eta_{2^{-\ell}}$, and thus 
\be
\l{ad1}
\begin{aligned}
f-f\ast\rho_{2^{-j}\ve}&=\sum_{\ell\ge 0}(f_\ell-f_\ell\ast\rho_{2^{-j}\ve})=\sum_{\ell\ge j}(f_\ell-f_\ell\ast\rho_{2^{-j}\ve})+\sum_{\ell< j}(f_\ell-f_\ell\ast\rho_{2^{-j}\ve})\\
&=\sum_{\ell\ge j}(f_\ell-f_\ell\ast\rho_{2^{-j}\ve})+\sum_{\ell< j}f_\ell\ast(\eta_{2^{-\ell}}-\eta_{2^{-\ell}}\ast\rho_{2^{-j}\ve})\\
&=\sum_{\ell\ge j}(f_\ell-f_\ell\ast\rho_{2^{-j}\ve})+\sum_{\ell< j}f_\ell\ast(\eta-\eta\ast\rho_{2^{\ell-j}\ve})_{2^{-\ell}}.\end{aligned}
\ee

Using \eqref{ad1}, we find that
\be
\l{ad2}
\|f-f\ast\rho_{2^{-j}\ve}\|_{L^p}\lesssim \sum_{\ell\ge j}\|f_\ell\|_{L^p}+
\sum_{\ell< j}2^{-s(j-\ell)}\alpha_{j-\ell,\ve}\|f_\ell\|_{L^p},
\ee
i.e.,
\be
\label{ad3}
2^{sj}\|f-f\ast\rho_{2^{-j}\ve}\|_{L^p} \lesssim \sum_\ell \left[ 2^{s(j-\ell)}\one_{\{\ell\geq j\}}(\ell) + \alpha_{j-\ell,\ve}\one_{\{\ell<j\}}(\ell) \right] 2^{s\ell}\|f_\ell\|_{L^p}.
\ee

We obtain \eqref{ac3} by combining \eqref{ad5} with \eqref{ad3} and with Schur's criterion (Lemma~\ref{schur}) applied to $X=Y=\ell^q$ and $k(j,\ell)=2^{s(j-\ell)}\one_{\{\ell\geq j\}}(\ell) + \alpha_{j-\ell,\ve}\one_{\{\ell<j\}}(\ell)$.
\ep

We continue with another characterization of the kernels $\rho$ satisfying the equivalent properties 1 and 2 in Theorem \ref{ab2}. For simplicity, the main results of our article were stated for inhomogeneous Besov spaces. It turns out that the homogeneous version of our next result is easier to understand than the inhomogeneous one, so that we start by presenting (without proof) the homogeneous cousin of Theorem \ref{sv1} below. 

In order to avoid subtle issues concerning the realization of homogeneous Besov spaces as spaces of distributions, we consider only  temperate distributions $f$ such that 
\be
\l{vu2}
 \widehat f\text{ is compactly supported in }\R^n\setminus\{0\}.
\ee 
Any such $f$ is smooth, and we have $f=\sum_{j\in\Z}f_j$ in $\sr'$, where (in the spirit of \eqref{LP}) $f_j=f\ast \va_{2^{1-j}}$, $\fo j\in\Z$. For $f$ satisfying \eqref{vu2}, we set
\bes
|f|_{\dot B^s_{p,q}}^q=\sum_{j\in\Z}2^{sjq}\|f_j\|_{L^p}^q,
\ees
with the obvious modification when $q=\infty$. Let us note that, the series $\sum_{j\in\Z}f_j$ containing only a finite number of non zero terms, we actually have 
\bes
\dot B^s_{p,q}=\{ f\in L^p(\R^n);\, f\text{ satisfies }\eqref{vu2}\},
\ees
but that the norm we consider is not equivalent to the $L^p$ norm.

As in the inhomogeneous case considered in this article, we may try to characterize the $L^1$ kernels $\rho$ such that 
\be
\l{vu3}
|f|_{\dot B^s_{p,q}}^q\sim\int_0^\infty\frac 1{\ve^{sq+1}}\|f-f\ast\rho_\ve\|_{L^p}^q\, d\ve,\ \text{ for every }f\text{ satisfying }\eqref{vu2}.
\ee
The homogeneous counterpart of Theorem \ref{ab2} consists of the following equivalence: for a fixed $s$ (not necessarily positive) \eqref{vu3} holds if and only if for a function $\va$ as in the Littlewood-Paley decomposition we have 
\be
\l{vu4}
\int_0^\infty\frac 1{\ve^{s+1}}\|\va-\va\ast\rho_\ve\|_{L^1}\, d\ve<\infty.
\ee
Necessity of \eqref{vu4} comes from the fact that \eqref{vu3}  holds with $p=q=1$. 

Let us now examine what is required in order to have \eqref{vu3} when $p=q=\infty$. If \eqref{vu3} holds  and if $|f|_{\dot B^s_{\infty,\infty}}<\infty$, then  the distribution
\bes
f-f\ast\rho_\ve=(\delta-\rho)_\ve\ast f
\ees
is  well-defined (as the convolution of a finite measure with a smooth bounded function). Moreover, $\|f-f\ast\rho_\ve\|_{L^\infty}$ is   controlled by the norm $|f|_{\dot B^s_{\infty,\infty}}$ (since \eqref{vu3} holds). A  moment thought shows that in particular  $\delta-\rho$ is an element of the dual of $\dot B^s_{\infty, \infty}$. Remarkably, this necessary condition is also sufficient, and is equivalent to the property \eqref{vu4}.

Theorem \ref{sv1} is the inhomogeneous counterpart of the above fact. In order to state this result, it is convenient to define  ad hoc norm and  function space. Fix $\zeta$, $\va$ as in the Littlewood-Paley decomposition \eqref{LP}. In order to simplify the proof of Theorem \ref{sv1}, we make the (unessential) assumption that
\be
\l{abade}
\va\text{ is even}.
\ee

Our appropriate function space is defined starting from the identity
\be
\l{vv1}
f=(f-f\ast\zeta)+\sum_{j\le -1}f\ast\va_{2^{-j}} := \sum_{j\le 0}f^\sharp_j,\ \fo f\in\sr'\text{ satisfying }\eqref{vu2}.
\ee
We define the appropriate norm
\be
\l{vv2}
[f]_{X^s_{p,q}}^q=\sum_{j\le 0}2^{sjq}\|f^\sharp_j\|_{L^p}^q,
\ee
with the corresponding modification when $q=\infty$. Let $X^s_{p,q}$ be the space of temperate distributions satisfying \eqref{vu2} and such that $[f]_{X^s_{p,q}}<\infty$.\footnote{This space is $\{ f\in L^p(\R^n);\, f\text{ satisfies }\eqref{vu2}\}$, but not with the $L^p$ norm.}

\bt
\l{sv1}
Let $s>0$. Then
property \eqref{ac1} is equivalent to 
\be
\l{vw1}
\delta-\rho\in \left(X^s_{\infty, \infty}\right)^\ast.
\ee
\et
\bp

{}
${}$

\noindent
\enquote{\eqref{ac1}$\implies$\eqref{vw1}}.
Let $\va$ be as in the Littlewood-Paley decomposition and let $\psi$ be as in \eqref{psi}. We may assume that $\psi$ is even. If $f\in\sr'$ and $\ve>0$ are such that $f\ast\va_\ve\in L^\infty$, then we have
\bes
\begin{aligned}
 (\delta-\rho)(f\ast\va_\ve)&=  (\delta-\rho)(f\ast\va_\ve\ast\psi_\ve)=[(\delta-\rho)\ast\psi_\ve](f\ast\va_\ve)=\int \left[(\delta-\rho)\ast\psi_\ve (x)\right]\, \left[f\ast\va_\ve(x)\right]\, dx.
\end{aligned}
\ees
In particular, if $j<0$ and $f\in X^s_{\infty, \infty}$, then 
\be
\l{ua1}
\left|(\delta-\rho)(f_j^\sharp)\right|=\left|\int (\delta-\rho)\ast\psi_{2^{-j}}(x)\, f_j^\sharp(x)\, dx \right|\le \|(\delta-\rho)\ast\psi_{2^{-j}}\|_{L^1}\|f_j^\sharp\|_{L^\infty}.
\ee

On the other hand, for $j=0$ we have $f_0^\sharp\in C^\infty\cap L^\infty$ (in view of \eqref{vu2} and of the definition of $X^s_{\infty, \infty}$) and thus
\be
\l{ua2}
\left|(\delta-\rho)(f_0^\sharp)\right|\le (1+\|\rho\|_{L^1})\|f_0^\sharp\|_{L^\infty}.
\ee

We next note that \eqref{vw3} (applied to $\psi$ instead of $\va$), which is a consequence of \eqref{ac1}, implies that
\be
\l{ua3}
\begin{aligned}
 \sum_{j<0}2^{-sj}\|(\delta-\rho)\ast\psi_{2^{-j}}\|_{L^1}& =\sum_{j<0}2^{-sj}\|\psi_{2^{-j}}-\rho\ast\psi_{2^{-j}}\|_{L^1}=\sum_{j<0}2^{-sj}\|\psi-\psi\ast\rho_{2^j}\|_{L^1}\\
 &=\sum_{k>0}2^{sk}\|\psi-\psi\ast\rho_{2^{-k}}\|_{L^1}<\infty.
\end{aligned}
\ee

By combining \eqref{ua1}--\eqref{ua3}, we obtain
\bes
\begin{aligned}
|(\delta-\rho)(f)|\lesssim & \sum_{j\le 0}\left|(\delta-\rho)(f_j^\sharp)\right|\lesssim  \|f_0^\sharp\|_{L^\infty}+\sum_{j<0}\|(\delta-\rho)\ast\psi_{2^{-j}}\|_{L^1}\|f_j^\sharp\|_{L^\infty}\\
\le & \|f_0^\sharp\|_{L^\infty}+\sup_{j<0}2^{sj}\|f_j^\sharp\|_{L^\infty}\sum_{j<0}2^{-sj}\|(\delta-\rho)\ast\psi_{2^{-j}}\|_{L^1}\lesssim \|f\|_{X^s_{\infty,\infty}},
\end{aligned}
\ees
and thus \eqref{vw1} holds.

\medskip
\noindent
\enquote{\eqref{vw1}$\implies$\eqref{ac1}}. We start by noting that an equivalent formulation of   \eqref{vw1} is
\be
\l{hi1}
\left[f=\sum_{j\in J}f_j^\sharp,\ \text{with }f_j^\sharp\text{ as in }\eqref{vv1}\text{ and }J\subset\Z_-\text{ finite}\right]\implies \left|(\delta-\rho)\left(\sum_{j\in J}f_j^\sharp\right)\right|\lesssim \sup_{j\in J}2^{sj}\|f_j^\sharp\|_{L^\infty}.
\ee

Step 1 in the proof of Theorem \ref{ab2} implies that, if we find some $\lambda\in\sr$ such that $\int\lambda\neq 0$ and 
\be
\l{ub1}
\sum_{j\ge 0}2^{sj}\|\lambda-\lambda\ast\rho_{2^{-j}}\|_{L^1}<\infty,
\ee
then \eqref{ac1} holds. 

Let $\zeta$, $\va$ be as in the Littlewood-Paley decomposition. We will prove that \eqref{ub1} holds with $\lambda=\zeta$. 

Set 
\bes
\alpha_j:=\|\va_{2^{j}}-\va_{2^{j}}\ast\rho\|_{L^1}=\|(\va-\va\ast\rho_{2^{-j}})_{2^{j}}\|_{L^1}=\|(\va-\va\ast\rho_{2^{-j}})\|_{L^1},\ \fo j>0.
\ees

We divide the proof of \eqref{ub1} into two steps.

\medskip
\noindent
{\it Step 1.}  It suffices to prove the key estimate
%The key estimate is 
\be
\l{ub2}
\sum_{j>0}2^{sj}\alpha_j<\infty.
\ee

Granted \eqref{ub2}, we prove \eqref{ub1} for $\lambda=\zeta$. Indeed, using the fact that 
\bes
\lim_{M\to\infty}\|\zeta_M-\zeta_M\ast\rho\|_{L^1}=\lim_{M\to\infty}\|\zeta-\zeta\ast\rho_{1/M}\|_{L^1}=0,
\ees
we find that, in $L^1$, we have
\be
\l{ub3}
\lim_{\ell\to\infty}\ \sum_{j=k+1}^{\ell} (\va_{2^{j}}-\va_{2^{j}}\ast\rho)=\lim_{\ell\to\infty}\left[ (\zeta_{2^k}-\zeta_{2^k}\ast\rho)-(\zeta_{2^\ell}-\zeta_{2^\ell}\ast\rho) \right]=\zeta_{2^k}-\zeta_{2^k}\ast\rho.
\ee

By \eqref{ub3}, we have
\be
\l{ub4}
\|\zeta_{2^k}-\zeta_{2^k}\ast\rho\|_{L^1}\le \sum_{j\ge k+1} \alpha_j.
\ee

By combining \eqref{ub2} with \eqref{ub4}, we obtain
\bes
\sum_{k\ge 0}2^{sk}\|\zeta-\zeta\ast\rho_{2^{-k}}\|_{L^1}=\sum_{k\ge 0}2^{sk}\|\zeta_{2^k}-\zeta_{2^k}\ast\rho\|_{L^1}\le \sum_{k\ge 0}\sum_{j\ge k+1}2^{sk}\alpha_j\lesssim \sum_{j>0}2^{sj}\alpha_j<\infty,
\ees
and thus \eqref{ub1} holds.

\medskip
\noindent
{\it Step 2.}
Proof of \eqref{ub2} completed.

For $\ell<0$, let $\psi_\ell\in C^\infty_c(\R^n)$ be such that $|\psi_\ell|\le 1$ and 
\be
\l{hi9}
\int [(\delta-\rho)\ast\va^\ell]\psi_\ell\ge \frac 12\|(\delta-\rho)\ast\va^\ell\|_{L^1}=\frac 12\alpha_{-\ell}.
\ee

Let $J\subset\Z^\ast_-$ be a fixed arbitrary finite set, and set 
\bes
f:=\sum_{\ell\in J}2^{-s\ell}\psi_\ell\ast\va^\ell.
\ees

By \eqref{hi9}, we have (using \eqref{abade})
\be
\l{hi2}
\sum_{\ell\in J}2^{-s\ell}\alpha_{-\ell}\lesssim \sum_{\ell\in J}2^{-s\ell}\int \left[(\delta-\rho)\ast\va^\ell\right]\, \psi_\ell=(\delta-\rho)\left(\sum_{\ell\in J}2^{-s\ell}\psi_\ell\ast\va^\ell\right).
\ee
By \eqref{hi1} and \eqref{hi2}, we have
\be
\l{hi3}
\sum_{\ell\in J}2^{-s\ell}\alpha_{-\ell}\lesssim \sup_{j\in M}2^{sj}\|f_j^\sharp\|_{L^\infty},
\ee
where $M\subset\Z_-$ is finite and such that $f_j^\sharp=0$ when $j\not\in M$.\footnote{Existence of such $M$ follows from \eqref{hi4} below.}

We next note that, when $j,\ell<0$, we have
\be
\l{hi7}
\va^\ell\ast\va^j=0\text{ when }|j-\ell|>1.
\ee

By \eqref{hi7}, when $j<0$ we have
\be
\l{hi4}
f_j^\sharp=\sum_{\ell\in J}2^{-s\ell}\left(\psi_\ell\ast\va^\ell\right)^\sharp_j=\sum_{\ell\in J}2^{-s\ell}\psi_\ell\ast\va^\ell\ast\va^j=\sum_{\substack{\ell\in J\\|\ell-j|\le 1}}2^{-s\ell}\psi_\ell\ast\va^\ell\ast\va^j,
\ee
and thus
\be
\l{hi5}
\|f_j^\sharp\|_{L^\infty}\lesssim \sum_{\substack{\ell\in J\\|\ell-j|\le 1}}2^{-s\ell}\|\psi_\ell\|_{L^\infty}\lesssim 2^{-sj}.
\ee

By \eqref{hi3} and \eqref{hi5}, we have
\be
\l{hi6}
\sum_{\ell\in J}2^{-s\ell}\alpha_{-\ell}\le C<\infty,
\ee
with $C$ independent of $J$.

We obtain \eqref{ub2} by taking, in \eqref{hi6}, the supremum over $J$.
\ep

\section{Further results}
\l{s5}

This section is devoted to the proofs of Propositions \ref{ab5}, \ref{ab7} and \ref{ab8}.

\subsection{Proof of Proposition~\ref{ab5}}

Proposition~\ref{ab5} is a direct consequence of the following more general result.

\bpr
\label{pmoments}
Let $\rho\in L^1$ satisfy $\int\rho=1$ and let $\eta\in\sr$ be such that $\int \eta \neq 0$.
Assume that $\rho$ has finite moments of any order:
\begin{equation*}
\int |y|^{k}|\rho(y)|\, dy <\infty \quad\text{for all } k\in\N.
\end{equation*}

Then
\begin{equation}\label{charmoments}
\int_0^1\frac 1{\ve^{s+1}}\|\eta-\eta\ast\rho_\ve\|_{L^1}\, d\ve<\infty \quad\text{if and only if}\quad s<k_0,
\end{equation}
where $k_0\in\N^*\cup \lbrace \infty \rbrace$ is the smallest non-zero moment of $\rho$:
\begin{equation*}
k_0 =\min \left\lbrace k\geq 1\colon \int y^{\otimes k} \rho(y)\, dy \neq 0\right\rbrace.
\end{equation*}
Here $y^{\otimes k}$ denotes the $k$-th order tensor $(y_{j_1}\cdots y_{j_k})_{1\leq j_1,\ldots,j_k\leq n}$.
\epr

Note that Proposition~\ref{pmoments} implies indeed Proposition~\ref{ab5} since for a bounded set $A$ of positive measure the second moment $\int_A y^{\otimes 2}\, dy$ is always non zero.

We now turn to the 
\bp[Proof of Proposition~\ref{pmoments}] We first treat  the case of a finite $k_0$. Since it holds
\begin{equation*}
\eta(x)-\eta\ast\rho_\ve (x) =\int (\eta(x)-\eta(x-\varepsilon y))\rho(y)\, dy,
\end{equation*}
we find, applying Taylor's formula,
\begin{equation*}
\eta(x)-\eta\ast\rho_\ve (x) = \frac{(-1)^{k_0+1}}{k_0 !}\varepsilon ^{k_0}\sum_{1\leq j_1,\ldots,j_{k_0}\leq n}\alpha_{j_1,\ldots,j_{k_0}} \partial_{j_1}\cdots\partial_{j_{k_0}}\eta(x) + \varepsilon^{k_0+1}R_\varepsilon (x),
\end{equation*}
where
\begin{equation}\label{alpha}
\alpha_{j_1,\ldots,j_{k}} := \int y_{j_1}\cdots y_{j_{k}}\rho(y)\, dy,
\end{equation}
and
\begin{equation*}
\| R_\varepsilon \|_{L^1}\leq \frac{ \|D^{k_0+1}\eta\|_{L^1}}{(k_0+1)!}\int |y|^{k_0+1}|\rho(y)|\, dy.
\end{equation*}

Therefore it holds
\begin{equation}\label{taylor}
\|\eta-\eta\ast\rho_\varepsilon \|_{L^1} = \frac{1}{k_0!}\varepsilon^{k_0} \Big\|\sum_{1\leq j_1,\ldots,j_{k_0}\leq n}\alpha_{j_1,\ldots,j_{k_0}} \partial_{j_1}\cdots\partial_{j_{k_0}}\eta \Big\|_{L^1} + O(\varepsilon^{k_0+1}),
\end{equation}
as $\varepsilon\to 0$.

We next claim that 
\begin{equation*}
c:=\Big\|\sum_{1\leq j_1,\ldots,j_{k_0}\leq n}\alpha_{j_1,\ldots,j_{k_0}} \partial_{j_1}\cdots\partial_{j_{k_0}}\eta \Big\|_{L^1} \neq 0.
\end{equation*}

Indeed, assume that $c=0$. Then we have
\begin{equation*}
\sum_{1\leq j_1,\ldots,j_{k_0}\leq n}\alpha_{j_1,\ldots,j_{k_0}} \xi_{j_1}\cdots \xi_{j_{k_0}} \hat\eta(\xi) =0 \quad\forall\xi\in\R^n.
\end{equation*}
Since $\hat\eta(0)\neq 0$ we deduce that
\begin{equation*}
\sum_{1\leq j_1,\ldots,j_{k_0}\leq n}\alpha_{j_1,\ldots,j_{k_0}} \xi_{j_1}\cdots \xi_{j_{k_0}} =0
\end{equation*}
for all sufficiently small $\xi$, and thus by homogeneity for every $\xi$. This is absurd since, by assumption, at least one of the coefficients $\alpha_{j_1,\ldots,j_{k_0}}$ is non zero.

Therefore $c\neq 0$ and the Taylor expansion \eqref{taylor} provides the equivalent
\begin{equation*}
\|\eta-\eta\ast\rho_\varepsilon \|_{L^1} \sim \frac{c}{k_0!} \varepsilon^{k_0}
\end{equation*}
as $\varepsilon\to 0$, which readily implies \eqref{charmoments}. This concludes the proof of Proposition~\ref{pmoments} when $k_0$ is finite. 

When $k_0=\infty$, the Taylor expansion shows that 
\begin{equation*}
\|\eta-\eta\ast\rho_\ve\|_{L^1} = O(\varepsilon ^k) \quad\text{for all }k\in\N,
\end{equation*}
so that it holds indeed
\begin{equation*}
\int_0^1\frac 1{\ve^{s+1}}\|\eta-\eta\ast\rho_\ve\|_{L^1}\, d\ve<\infty
\end{equation*}
for every $s>0$.
\ep

\subsection{Proof of Proposition~\ref{ab7}}

We fix $\rho\in L^1$ with $\int\rho=1$ and $0<s<1$, and assume that $\rho$ satisfies the moment condition \eqref{moment}:
\begin{equation*}
\int |y|^s |\rho(y)|\, dy < \infty.
\end{equation*}

We consider an arbitrary test function $\eta\in\sr$ and are going to show that condition \eqref{ac1} is satisfied (so that, by Theorem~\ref{ab2}, the norm equivalence \eqref{ab3} is valid). To this end we compute
\begin{align*}
\int_0^\infty \| \eta-\eta\ast\rho_\ve\|_{L^1}\,  \frac{d\varepsilon}{\varepsilon^{s+1}} & \leq \int_0^\infty \int \|\eta-\eta(\cdot-\ve y)\|_{L^1} |\rho(y)|\,  dy \frac{d{\ve}}{{\ve^{s+1}}} \\
& = \int |y|^s\rho(y) \int_0^\infty \frac{\|\eta-\eta(\cdot-\varepsilon y)\|_{L^1}}{|\varepsilon y|^s}\, \frac{d\ve}{\ve}\, dy \\
& = \int |y|^s\rho(y) \int_0^\infty \|\eta-\eta(\cdot-\delta \frac{y}{|y|})\|_{L^1}\, \frac{d\delta}{\delta^{s+1}}\, dy.
\end{align*}

On the other hand, for every $\omega\in \mathbb S^{n-1}$ we have the estimate
\begin{equation*}
\int_0^\infty \|\eta-\eta(\cdot-\delta \omega)\|_{L^1}\, \frac{d\delta}{\delta^{s+1}}
\leq \|D\eta\|_{L^1}\int_0^1\, \frac{d\delta}{\delta^s} + 2\|\eta\|_{L^1}\int_1^\infty\,  \frac{d\delta}{\delta^{s+1}}=:C(\eta)<\infty,
\end{equation*}
and therefore we conclude that
\begin{equation*}
\int_0^\infty \| \eta-\eta\ast\rho_\ve\|_{L^1}\, \frac{d\varepsilon}{\varepsilon^{s+1}} \leq C(\eta) \int |y|^s |\rho(y)|\, dy < \infty,
\end{equation*}
which finishes the proof of Proposition~\ref{ab7}.\hfill$\square$

\subsection{Proof of Proposition~\ref{ab8}}

Let $s>0$ and let $\rho\in L^1$ satisfy $\int\rho =1$ and $\rho\geq 0$. We assume that the norm equivalence \eqref{ab3} is valid. Then by Theorem~\ref{ab2} (and Step 1 in its proof), it holds
\begin{equation*}
\int_0^1 \| \eta-\eta\ast\rho_\ve\|_{L^1} \, \frac{d\varepsilon}{\varepsilon^{s+1}}<\infty
\end{equation*}
for every $\eta\in\sr$. We fix such a function $\eta\geq 0$, $\eta\not\equiv 0$, with support in the unit ball:
\begin{equation*}
\eta (x) = 0 \quad\text{for }|x|\geq 1.
\end{equation*}
We are going to show that
\begin{equation}\label{necmom}
 \int_0^1 \|\eta-\eta\ast\rho_\varepsilon\|_{L^1}\, \frac{d\ve}{\varepsilon^{s+1}}  \geq c \|\eta\|_{L^1}\int |y|^s \rho(y)\, dy -C (\|\eta\|_{L^1}+\|\eta\|_{L^\infty} \|\rho\|_{L^1}),
\end{equation}
for some  constants $c=c(s),C=C(s)>0$. Obviously \eqref{necmom} implies the conclusion of Proposition~\ref{ab8}: the function $\rho$ satisfies the finite moment condition
\begin{equation*}
\int |y|^s\rho(y)\, dy <\infty.
\end{equation*}

We now turn to  the proof of \eqref{necmom}. Note that 
\begin{equation*}
\int_1^\infty \frac{1}{\varepsilon^{s+1}}\|\eta-\eta\ast\rho_\varepsilon\|_{L^1}\, d\varepsilon \leq \int_1^\infty\, \frac{d\varepsilon}{\varepsilon^{s+1}}(\|\eta\|_{L^1}+ \|\eta\|_{L^\infty}\|\rho\|_{L^1}).
\end{equation*}
Hence it suffices to show that
\begin{equation*}
\int_0^\infty \frac{1}{\varepsilon^{s+1}}\|\eta-\eta\ast\rho_\varepsilon\|_{L^1}\, d\varepsilon \geq c \|\eta\|_{L^1}\int |y|^s \rho(y)\, dy.
\end{equation*}
Since $\eta(x)=0$ for $|x|\geq 1$, and since $\eta$ and $\rho$ are non negative, it holds
\begin{align*}
\| \eta-\eta\ast\rho_\varepsilon \|_{L^1} & \geq \iint_{|x|\geq 1} \eta(x-\varepsilon y)\rho(y)\,  dy  = \iint_{|z+\varepsilon y|\geq 1} \eta(z)\rho(y)\,  dy dz.
\end{align*}
Thus we obtain
\be
\l{ana1}
\begin{aligned}
\int_0^\infty \frac{1}{\varepsilon^{s+1}}\|\eta-\eta\ast\rho_\varepsilon\|_{L^1}\, d\varepsilon & 
\geq \iiint_{|z+\varepsilon y|\geq 1} \eta(z)\frac{\rho(y)}{\varepsilon^{s+1}}\, dy dz d\varepsilon \\
& = \iiint_{|z+\delta {y}/{|y|}|\geq 1} \eta(z)\frac{\rho(y)|y|^s}{\delta^{s+1}}\, dy dz d\delta.
\end{aligned}
\ee
Note that it holds
\begin{equation*}
[|\delta|\ge 2\text{ and }|z|<1]\implies |z+\delta {y}/{|y|}|\geq 1.
\end{equation*}
Therefore, the domain of integration in the last integral in \eqref{ana1} contains the set
\bes
\{ (y,z,\delta);\, y\neq 0,\, |z|<1,\, \delta\ge 2\}.
\ees
%\begin{equation*}
% \one_{|z+\delta \frac{y}{|y|}|\geq 1} \eta(z) = \eta(z) \quad\text{if }\delta \geq 2.
%\end{equation*}
%Hence we deduce
We find that
\begin{equation*}
\int_0^\infty \frac{1}{\varepsilon^{s+1}}\|\eta-\eta\ast\rho_\varepsilon\|_{L^1}\, d\varepsilon 
 \geq \|\eta\|_{L^1} \int_2^\infty\, \frac{d\delta}{\delta^{s+1}}\int \rho(y)|y|^s\, dy,
\end{equation*}
which completes the proof of \eqref{necmom}.\hfill$\square$

\bibliographystyle{plain}               
  \bibliography{bibsobolev.bib}   

\end{document}